\numberwithin{equation}{section}
\newtheorem{theorem}{Theorem}[section]
\newtheorem{lemma}[theorem]{Lemma}
\newtheorem{proposition}[theorem]{Proposition}
\newtheorem{corollary}[theorem]{Corollary}
\theoremstyle{definition}
\newtheorem{definition}[theorem]{Definition}
\begin{document}

\baselineskip=15.5pt

\title[Homogeneous bundles and trivial logarithmic tangent bundle]{Homogeneous principal bundles over
manifolds with trivial logarithmic tangent bundle}

\author[H. Azad]{Hassan Azad}

\address{Abdus Salam School of Mathematical Sciences, GC University Lahore, 68-B, New Muslim
Town, Lahore 54600, Pakistan}

\email{hassan.azad@sms.edu.pk}

\author[I. Biswas]{Indranil Biswas}

\address{School of Mathematics, Tata Institute of Fundamental
Research, Homi Bhabha Road, Mumbai 400005, India}

\email{indranil@math.tifr.res.in}

\author[M. A. Khadam]{M. Azeem Khadam}

\address{Abdus Salam School of Mathematical Sciences, GC University Lahore, 68-B, New Muslim
Town, Lahore 54600, Pakistan}

\email{azeem.khadam@sms.edu.pk}

\subjclass[2010]{32M12, 32L05, 32G08}

\keywords{Logarithmic connection, homogeneous bundle, semi-torus, infinitesimal rigidity.}

\thanks{The second-named author is partially supported by a J. C. Bose Fellowship. The third-named
author is grateful to ASSMS, GC University Lahore for the support of this research under
the postdoctoral fellowship.}

\begin{abstract}
Winkelmann considered compact complex manifolds $X$ equipped with a reduced effective normal crossing
divisor $D\, \subset\, X$ such that the logarithmic tangent bundle $TX(-\log D)$ is
holomorphically trivial. He characterized them as pairs $(X,\, D)$ admitting a holomorphic action of a
complex Lie group $\mathbb G$ satisfying certain conditions \cite{Wi1}, \cite{Wi2}; this $\mathbb G$ is
the connected component, containing the identity element, of the group of holomorphic automorphisms of $X$
that preserve $D$. We characterize the homogeneous holomorphic principal $H$--bundles over $X$, where $H$
is a connected complex Lie group. Our characterization says that the following three are equivalent:\\

(1)~ $E_H$ is homogeneous.\\

(2)~ $E_H$ admits a logarithmic connection singular over $D$.\\

(3)~ The family of principal $H$--bundles $\{g^*E_H\}_{g\in \mathbb G}$ is infinitesimally
rigid at the identity element of the group $\mathbb G$.\\
\end{abstract}

\maketitle

\section{Introduction}\label{se1}

The present work was motivated by a work of Winkelmann \cite{Wi2}. We begin by very briefly recalling from \cite{Wi2}.
Let $X$ be a compact connected complex manifold and $D\, \subset\, X$ a reduced effective normal crossing
divisor (definition will be recalled in Section \ref{sec2.1}), such that the corresponding
logarithmic tangent bundle $TX(-\log D)$ is holomorphically trivial.
Let $\mathbb G$ be the connected component of the 
group of holomorphic automorphisms of $X$ that preserve $D$. This $\mathbb G$ is a complex Lie group 
that acts transitively on the complement $X\setminus D$. The tautological action of $\mathbb G$ on $X$
has certain properties (they are recalled in Section \ref{se3.1}); these properties actually characterize pairs
$(X,\, D)$ of the above type \cite[p.~196, Theorem 1]{Wi2}.

Let $H$ be a connected complex Lie group and $E_H$ a holomorphic principal $H$--bundle over $X$. It is called
homogeneous if for every $g\, \in\, {\mathbb G}$ the pulled back principal $H$--bundle $g^*E_H$ is holomorphically 
isomorphic to $E_H$ (Definition \ref{def1} and Proposition \ref{prop2}).

Let $\varpi\, :\, {\mathbb G}\times X\, \longrightarrow\, X$ be the evaluation map
defined by $(g,\, x)\, \longmapsto\, g(x)$. Let
$$
{\mathcal E}_H\, :=\, \varpi^*E_H\, \longrightarrow\, {\mathbb G}\times X
$$
be the holomorphic principal $H$--bundle over ${\mathbb G}\times X$ obtained by pulling back $E_H$
using the above holomorphic map $\varpi$. Consider ${\mathcal E}_H$ as a holomorphic family of holomorphic
principal $H$--bundles over $X$ parametrized by $\mathbb G$. Let
$$
f\, :\, \text{Lie}({\mathbb G})\,=\, T_e{\mathbb G}\, \longrightarrow\, H^1(X,\, \text{ad}(E_H))
$$
be the infinitesimal deformation map at the identity element $e\, \in\, \mathbb G$ for this family
of holomorphic principal $H$--bundles over $X$.

Let $E_H$ be any holomorphic principal $H$--bundle over $X$.
We prove that the following three statements are equivalent:
\begin{enumerate}
\item {\it $E_H$ is homogeneous}.

\item {\it $E_H$ admits a logarithmic connection singular over $D$}.

\item {\it The above homomorphism $f$ vanishes identically (meaning the above family of principal $H$--bundles
is infinitesimally rigid at $e\, \in\, \mathbb G$)}.
\end{enumerate}
Corollary \ref{cor2} says that the first two statements are equivalent.
Proposition \ref{prop1} says that the first and the third statements are equivalent.

\section{Logarithmic connections on a holomorphic principal bundle}

\subsection{Logarithmic differential forms}\label{sec2.1}

Let $X$ be a complex manifold of complex dimension $d$. A reduced effective divisor
$D\, \subset\, X$ is said to be a \textit{normal crossing divisor} if for every point $x\, \in\, D$
there are holomorphic coordinate functions $z_1,\, \cdots ,\, z_d$ defined on an open
neighborhood $U\, \subset\, X$ of $x$ with $z_1(x)\,=\, \cdots\,=\, z_d(x)\,=\, 0$,
and there is an integer $1\, \leq\, k\, \leq\, d$, such that
\begin{equation}\label{e1}
D\cap U\, =\, \{y\, \in\, U \, \mid\, z_1(y)\,=\, \cdots \,=\, z_k(y)\,=\, 0\}\, .
\end{equation}
Note that it is not assumed that the irreducible components of $D$ are smooth. In \cite{Wi1}
and \cite{Wi2}, the terminology ``locally simple normal crossing divisor'' is used; however, it seems
that ``normal crossing divisor'' is used more often in the literature; see \cite{Co}.

The holomorphic cotangent and tangent bundles of $X$ will be denoted by $\Omega^1_X$ and $TX$
respectively. Take a normal crossing divisor $D$ on $X$. Let
\begin{equation}\label{e2}
TX(-\log D)\, \subset\, TX
\end{equation}
be the coherent analytic subsheaf generated by all locally defined holomorphic vector fields $v$ on $X$
such that $v({\mathcal O}_X(-D))\, \subset\, {\mathcal O}_X(-D)$. In other words, if $v$
is a holomorphic vector field defined over $U\, \subset\, X$, then
$v$ is a section of $TX(-\log D)\vert_U$ if and only if $v(f)\vert_{U\cap D}\,=\, 0$ for
all holomorphic functions $f$ on $U$ that vanish on $U\cap D$. It is straightforward to check that
the stalk of sections of $TX(-\log D)$ at the point $x$ in \eqref{e1} is generated by
$$
z_1\frac{\partial}{\partial z_1},\, \cdots,\, z_k\frac{\partial}{\partial z_k},\,
\frac{\partial}{\partial z_{k+1}} ,\, \cdots,\, \frac{\partial}{\partial z_d}\, .
$$
The condition that $D$ is a normal crossing divisor implies that the coherent analytic sheaf
$TX(-\log D)$ is in fact locally free. Clearly, we have $TX(-\log D)\vert_{X\setminus D}\,=\,
TX\vert_{X\setminus D}$. This vector bundle $TX(-\log D)$ is called the \textit{logarithmic
tangent bundle} for the pair $(X,\, D)$.

Restricting the natural homomorphism $TX(-\log D)\, \longrightarrow\, TX$ to the divisor $D$,
we get a homomorphism
$$
\psi\, :\, TX(-\log D)\vert_D\, \longrightarrow\, TX\vert_D\, .
$$
Let
\begin{equation}\label{f1}
{\mathbb L}\, :=\, \text{kernel}(\psi)\, \subset\, TX(-\log D)\vert_D
\end{equation}
be the kernel. To describe $\mathbb L$, let
$$
\nu\, :\, \widetilde{D}\, \longrightarrow\, D
$$
be the normalization; the given condition on $D$ implies that $\widetilde{D}$ is smooth.
Then $\mathbb L$ is identified with the direct image
\begin{equation}\label{f0}
{\mathbb L}\, =\, \nu_*{\mathcal O}_{\widetilde{D}}\, .
\end{equation}
The key point in the construction of this isomorphism is the following: Let $Y$ be a Riemann surface and
$y_0\, \in\, Y$ a point; then for any holomorphic coordinate function $z$ around $y_0$, with $z(y_0)\,=\,0$,
the evaluation of the local section $z\frac{\partial}{\partial z}$ of $TY\otimes {\mathcal O}_Y(-y_0)$
at the point $y_0$ does not depend on the choice of the coordinate function $z$.

Consider the Lie bracket operation on the locally defined holomorphic vector fields
on $X$. The holomorphic sections of $TX(-\log D)$ are closed under this Lie bracket. Indeed, if
$v_1,\, v_2$ are holomorphic sections of $TX(-\log D)$ over $U\, \subset\, X$, and $f$
is a holomorphic function on $U$ that vanishes on $U\cap D$, then from the identity
$$
[v_1,\, v_2](f)\, =\, v_1(v_2(f)) - v_2(v_1(f))
$$
we conclude that the function $[v_1,\, v_2](f)$ vanishes on $U\cap D$.

The dual vector bundle $TX(-\log D)^*$ is denoted by
$\Omega^1_X(\log D)$. From \eqref{e2} it follows that
$$
(TX)^* \,=\, \Omega^1_X\, \subset\, \Omega^1_X(\log D)\, .
$$
The stalk of sections of $\Omega^1_X(\log D)$ at the point $x$ in \eqref{e1} is generated by
$$
\frac{1}{z_1}dz_1,\, \cdots,\, \frac{1}{z_k}dz_k,\,
dz_{k+1} ,\, \cdots,\, dz_d\, .
$$
For every integer $i \, \geq\, 0$, define $\Omega^i_X(\log D)\,:=\, \bigwedge^i \Omega^1_X(\log D)$.

Let
$$
\eta\, :\, D\, \longrightarrow\, X
$$
be the inclusion map. Taking dual of the homomorphism $\psi$ (see \eqref{f1}), and using
\eqref{f0}, we get the following short exact sequence of coherent analytic sheaves on $X$
$$
0\, \longrightarrow\, \Omega^1_X\,\longrightarrow\, \Omega^1_X(\log D)\,
\stackrel{\mathcal R}{\longrightarrow}\, (\eta\circ\nu)_*{\mathcal O}_{\widetilde{D}}\, 
\longrightarrow\, 0
\, ,
$$
where $\nu$ is the map in \eqref{f0}; the above homomorphism $\mathcal R$
is known as the \textit{residue map}.

We refer the reader to \cite{Sa} for more details on logarithmic forms and vector
fields.

\subsection{Atiyah bundle and logarithmic connection}

Let $H$ be a complex Lie group. The Lie algebra of $H$ will be denoted by $\mathfrak h$.
Let
\begin{equation}\label{e3}
p\, :\, E_H\, \longrightarrow\, X
\end{equation}
be a holomorphic principal $H$--bundle; we recall that this means that $E_H$ is a
holomorphic fiber bundle over $X$ equipped with a holomorphic right-action of the group $H$
\begin{equation}\label{e4}
q'\, :\, E_H\times H\,\longrightarrow\, E_H
\end{equation}
such that $p(q'(z,\, h))\,=\, p(z)$ for all $(z,\, h)\, \in\, E_H\times H$, where
$p$ is the projection in \eqref{e3} and, furthermore, the resulting
map to the fiber product
$$
E_H\times H \, \longrightarrow\, E_H\times_X E_H\, , \ \ (z,\, h) \, \longrightarrow\, (z,\,
q'(z,\, h))
$$
is a biholomorphism. For notational convenience, the point
$q'(z,\, h)\, \in\, E_H$, where $(z,\, h)\,\in\, E_H\times H$, will be denoted by $zh$.

As before, let $D\, \subset\, X$ be a normal crossing divisor. Since $p$ in \eqref{e3} is
a holomorphic submersion, the inverse image
\begin{equation}\label{e5}
\widehat{D} \,:=\, p^{-1}(D)\, \subset\, E_H
\end{equation}
is also a normal crossing divisor. Consider the action of $H$ on the tangent bundle $TE_H$
given by the action of $H$ on $E_H$ in \eqref{e4}. This action of $H$ on $TE_H$ clearly
preserves the subsheaf $TE_H(-\log \widehat{D})\, \subset\, TE_H$. The corresponding quotient
\begin{equation}\label{e6}
{\rm At}(E_H)(-\log D)\, :=\, TE_H(-\log \widehat{D})/H \, \longrightarrow\,
E_H/H \,=\, X
\end{equation}
is evidently a holomorphic vector bundle over $X$; it is called the \textit{logarithmic
Atiyah bundle} (see \cite{At} for the case where $D$ is the zero divisor).

Let $dp\, :\, TE_H\, \longrightarrow\, p^*TX$ be the differential of the projection $p$
in \eqref{e3}. Let
$$
{\mathcal K} \, :=\, \text{kernel}(dp)\, \subset\, TE_H
$$
be the kernel of $dp$. So we have the following short exact sequence of holomorphic vector bundles
on $E_H$:
\begin{equation}\label{f2}
0\, \longrightarrow\, {\mathcal K}\, \longrightarrow\, TE_H\, \stackrel{dp}{\longrightarrow}\,
p^*TX \, \longrightarrow\,0\, .
\end{equation}
Note that we have $${\mathcal K} \,\subset\, TE_H(-\log \widehat{D})\, ,$$ and
$dp(TE_H(-\log \widehat{D}))\,=\, p^*(TX(-\log D))$. Therefore, the
short exact sequence in \eqref{f2} gives the following
short exact sequence of holomorphic vector bundles over $E_H$
\begin{equation}\label{e7}
0\, \longrightarrow\, {\mathcal K}\, \longrightarrow\, TE_H(-\log \widehat{D})\,
\stackrel{dp}{\longrightarrow}\, p^*(TX(-\log D))\, \longrightarrow\,0
\end{equation}
(the restriction of $dp$ to $TE_H(-\log \widehat{D})$ is also denoted by $dp$).
The above action of $H$ on $TE_H$ clearly preserves the subbundle $\mathcal K$. The quotient
$$
\text{ad}(E_H)\,:=\, {\mathcal K}/H \,\, \longrightarrow\, E_H/H\,=\, X
$$
is called the \textit{adjoint vector bundle} for $E_H$. We note that $\text{ad}(E_H)$ is identified
with the holomorphic vector bundle $E_H\times^H \mathfrak h\, \longrightarrow\, X$ associated to the
principal $H$--bundle $E_H$ for the adjoint action of $H$ on
the Lie algebra $\mathfrak h$. This isomorphism
between ${\mathcal K}/H$ and $E_H\times^H \mathfrak h$ is obtained from the fact that the
action of $H$ on $E_H$ identifies $\mathcal K$ with the trivial holomorphic vector bundle
$E_H\times \mathfrak h$ over $E_H$ with fiber $\mathfrak h$.

Take quotient of the vector bundles in \eqref{e7} by the actions of $H$. From \eqref{e7} we get a
short exact sequence of holomorphic vector bundles over $X$
\begin{equation}\label{e8}
0\, \longrightarrow\, \text{ad}(E_H)\,:=\,{\mathcal K}/H\, \stackrel{\iota_0}{\longrightarrow}\,
(TE_H(-\log \widehat{D}))/H \,=:\, {\rm At}(E_H)(-\log D)
\end{equation}
$$
\stackrel{\beta}{\longrightarrow}\,
(p^*(TX(-\log D)))/H \,=\, TX(-\log D) \, \longrightarrow\,0\, ;
$$
it is called the \textit{logarithmic Atiyah exact sequence} for $E_H$. The homomorphism $dp$ in
\eqref{e7} descents to the homomorphism $\beta$ in \eqref{e8}.

A \textit{logarithmic connection} on $E_H$ singular over $D$ is a holomorphic homomorphism
of vector bundles
$$
\varphi\, :\, TX(-\log D)\, \longrightarrow\,{\rm At}(E_H)(-\log D)
$$
such that
\begin{equation}\label{e10}
\beta\circ\varphi\,=\, \text{Id}_{TX(-\log D)}\, ,
\end{equation}
where $\beta$ is the projection in \eqref{e8}. In other words, giving a logarithmic connection 
on $E_H$ singular over $D$ is equivalent to giving a holomorphic splitting of the short exact 
sequence in \eqref{e8}. See \cite{De} for logarithmic connections (see also \cite{BHH}).

As noted before, the locally defined holomorphic sections of the
logarithmic tangent bundles $TX(-\log D)$ and $TE_H(-\log 
\widehat{D})$ are closed under the Lie bracket operation of vector fields. The locally defined 
holomorphic sections of the subbundle $\mathcal K$ in \eqref{f2} are clearly closed under the 
Lie bracket operation. The homomorphisms in the exact sequence \eqref{f2} are all compatible 
with the Lie bracket operation. Since the Lie bracket operation commutes with diffeomorphisms, 
for any two $H$--invariant holomorphic vector fields $v,\, w$ defined on an $H$--invariant 
open subset of $E_H$, their Lie bracket $[v,\, w]$ is again holomorphic and $H$--invariant. Therefore, 
the sheaves of sections of the three vector bundles in \eqref{e8} are all equipped
with a Lie bracket 
operation. Moreover, all the homomorphisms in \eqref{e8} commute with these operations.

Take a homomorphism
$$
\varphi\, :\, TX(-\log D)\, \longrightarrow\,{\rm At}(E_H)(-\log D)
$$
satisfying the condition in \eqref{e10}. Then for any two holomorphic sections
$v_1,\, v_2$ of $TX(-\log D)$ over $U\, \subset\, X$, consider
$$
{\mathbb K}(v_1,\, v_2)\, :=\, [\varphi(v_1),\, \varphi(v_2)]- \varphi([v_1,\, v_2])\, .
$$
The projection $\beta$ in \eqref{e8} intertwines the Lie bracket operations
on the sheaves of sections of ${\rm At}(E_H)(-\log D)$ and $TX(-\log D)$, and hence we have
$\beta({\mathbb K}(v_1,\, v_2))\,=\, 0$. Consequently, ${\mathbb K}(v_1,\, v_2)$ is a holomorphic section
of $\text{ad}(E_H)$ over $U$. From the identity $[fv,\, w]\,=\, f[v,\, w]- w(f)\cdot v$, where
$f$ is a holomorphic function while $v$ and $w$ are holomorphic vector fields, it follows that
$$
{\mathbb K}(fv_1,\, v_2) \,=\, f {\mathbb K}(v_1,\, v_2)\, .
$$
Also, we have ${\mathbb K}(v_1,\, v_2)\,=\, -{\mathbb K}(v_2,\, v_1)$. Therefore, the mapping
$(v_1,\, v_2)\, \longmapsto\, {\mathbb K}(fv_1,\, v_2)$ defines a holomorphic section
\begin{equation}\label{e13}
{\mathbb K}(\varphi)\, \in\, H^0(X,\, \Omega^2_X(\log D)\otimes {\rm ad}(E_H))\, .
\end{equation}
The section ${\mathbb K}(\varphi)$ in \eqref{e13} is called the \textit{curvature} of the
logarithmic connection $\varphi$.

\subsection{Residue of a logarithmic connection}

The quotient $(TE_H)/H$ is a holomorphic vector
bundle over $E_H/H\,=\, X$. It is the Atiyah bundle for $E_H$; let ${\rm At}(E_H)$ denote this
Atiyah bundle (see \cite{At}). Taking quotient of the vector bundles in \eqref{f2}
by the actions of $H$, from \eqref{f2} we get a
short exact sequence of holomorphic vector bundles over $X$
$$
0\, \longrightarrow\, \text{ad}(E_H)\,:=\,{\mathcal K}/H\, \longrightarrow\,
(TE_H)/H \,=:\, {\rm At}(E_H)
$$
\begin{equation}\label{f3}
\stackrel{\beta'}{\longrightarrow}\,
(p^*TX)/H \,=\, TX \, \longrightarrow\,0\, ,
\end{equation}
which is known as the Atiyah exact sequence for $E_H$ (see \cite{At});
note that $\beta$ in \eqref{e8} is the restriction of $\beta'$ in \eqref{f3}. Restricting
to $D$ the exact sequences in \eqref{e8} and \eqref{f3}, we get the following commutative diagram
\begin{equation}\label{e9}
\begin{matrix}
0& \longrightarrow & \text{ad}(E_H)\vert_D & \stackrel{\widehat{\iota}_0}{\longrightarrow} &
{\rm At}(E_H)(-\log D)\vert_D & \stackrel{\widehat{\beta}}{\longrightarrow} &
TX(-\log D)\vert_D & \longrightarrow\,0 &\\
&& \Vert && ~\Big\downarrow\mu && ~\Big\downarrow\psi\\
0& \longrightarrow & \text{ad}(E_H)\vert_D & \stackrel{\iota_1}{\longrightarrow} &
{\rm At}(E_H)\vert_D & \stackrel{\widehat{\beta}'}{\longrightarrow} &
TX\vert_D & \longrightarrow\,0 &
\end{matrix}
\end{equation}
whose rows are exact; the map $\psi$ in the one in \eqref{f1} and
$\mu$ is the homomorphism given by the natural homomorphism ${\rm At}(E_H)(-\log D)
\,\longrightarrow\, {\rm At}(E_H)$. In \eqref{e9} the following convention is employed: the restriction
to $D$ of a map on $X$ is denoted by the same symbol after adding a hat. From
\eqref{f1} we know that the kernel of $\psi$ is $\mathbb L\,=\,
\nu_*{\mathcal O}_{\widetilde{D}}$ (see \eqref{f0}). Let
$$
\iota_{\mathbb L}\, :\, {\mathbb L}\, \longrightarrow\, TX(-\log D)\vert_D
$$
be the inclusion map.

Let $\varphi\, :\, TX(-\log D)\, \longrightarrow\, {\rm At}(E_H)(-\log D)$ be a
logarithmic connection on $E_H$ singular over $D$. Consider the composition
$$
\widehat{\varphi}\circ\iota_{\mathbb L}\,:\, {\mathbb L}\, \longrightarrow\,
{\rm At}(E_H)(-\log D)\vert_D
$$
(the restriction of $\varphi$ to $D$ is denoted by $\widehat{\varphi}$). From the
commutativity of the diagram in \eqref{e9} it follows that
\begin{equation}\label{e11}
\widehat{\beta}'\circ\mu\circ \widehat{\varphi}\circ\iota_{\mathbb L}\,=\,
\psi\circ \widehat{\beta}\circ\widehat{\varphi}\circ\iota_{\mathbb L}\,.
\end{equation}
But $\widehat{\beta}\circ\widehat{\varphi}\,=\, \text{Id}_{TX(-\log D)\vert_D}$
by \eqref{e10}, while $\psi\circ\iota_{\mathbb L}\,=\, 0$ by \eqref{f1}, so these together
imply that $$\psi\circ \widehat{\beta}\circ\widehat{\varphi}\circ\iota_{\mathbb L}\,=\, 0\, .$$
Hence from \eqref{e11} we conclude that
$$
\widehat{\beta}'\circ\mu\circ \widehat{\varphi}\circ\iota_{\mathbb L}\,=\,0\, .
$$
Now from the exactness of the bottom row in \eqref{e9} it follows that the image of
$\mu\circ \widehat{\varphi}\circ\iota_{\mathbb L}$ is contained in the image of the
injective map $\iota_1$ in \eqref{e9}. Therefore,
$\mu\circ \widehat{\varphi}\circ\iota_{\mathbb L}$ defines a map
\begin{equation}\label{e12}
{\mathcal R}_\varphi\, :\, {\mathbb L}\, \longrightarrow\, \text{ad}(E_H)\vert_D\, .
\end{equation}
The homomorphism ${\mathcal R}_\varphi$ in \eqref{e12} is called the \textit{residue} of
the logarithmic connection $\varphi$ \cite{De}.

\section{Manifolds with trivial logarithmic tangent bundle}

\subsection{Trivialization of the logarithmic tangent bundle}\label{se3.1}

We now assume that
\begin{enumerate}
\item $X$ is compact, and

\item the logarithmic tangent bundle $TX(-\log D)$ is holomorphically trivial.
\end{enumerate}
Such pairs $(X,\, D)$ were classified in \cite{Wi2}; this was done earlier in \cite{Wi1} 
under an extra assumption that $X$ lies in class $\mathcal C$. Below we briefly recall from 
\cite{Wi2}.

Let $X_0\,:=\,X\setminus D$ be the complement. Denote by $\mathbb G$ the connected component of the 
group of holomorphic automorphisms of $X$ that preserve $D$. This $\mathbb G$ is a complex Lie group 
and it acts transitively on $X_0$. For each point of $X_0$, the isotropy subgroup of $\mathbb G$ is 
discrete. Let $C$ denote the connected component of the center of $\mathbb G$ containing the identity 
element. It is a semi-torus, meaning $C$ is a quotient of the additive group
$({\mathbb C}^{\dim C},\, +)$ by a discrete subgroup that generates the vector space
${\mathbb C}^{\dim C}$. There is a locally holomorphically trivial fibration
$$
\pi\, :\, X\, \longrightarrow\, Y\, ,
$$
such that
\begin{itemize}
\item $Y$ is a compact parallelizable manifold, more precisely, the
quotient group ${\mathbb G}/C$ acts transitively on $Y$ with discrete cocompact isotropies,

\item the projection $\pi$ is $\mathbb G$--equivariant and it admits a holomorphic connection
preserved by the action of $\mathbb G$,

\item the typical fiber of the fiber bundle $\pi$ is an equivariant compactification of
$C$, such that all isotropy subgroups are semi-tori, and

\item $C$ is the structure group of the fiber bundle.
\end{itemize}
(See \cite[p.~196, Theorem 1]{Wi2}.)

Let $\mathfrak g$ denote the Lie algebra of the above defined group $\mathbb G$. Let
$$
{\mathcal G}\, :=\, X\times {\mathfrak g}\, \longrightarrow\, X
$$
be the trivial holomorphic vector bundle over $X$ with fiber $\mathfrak g$.

The tautological action of $\mathbb G$ on $X$ (recall that $\mathbb G$ is a subgroup of the 
group of holomorphic automorphisms of $X$) produces a homomorphism
$$
\gamma_0\, :\, {\mathfrak g} \, \longrightarrow\, H^0(X,\, TX)\, .
$$
This homomorphism $\gamma_0$ preserves the Lie algebra structures of $\mathfrak g$
and $H^0(X,\, TX)$ (its Lie algebra structure is given by Lie bracket of vector fields).
The action of $\mathbb G$ on $X$, by definition, preserves $D$, and from this it follows that
\begin{equation}\label{e19}
\gamma_0({\mathfrak g})\, \subset\, H^0(X,\, TX(-\log D))\, .
\end{equation}
Let
\begin{equation}\label{e15}
\gamma\, :\, {\mathcal G}\, :=\, X\times {\mathfrak g}\, \longrightarrow\, TX(-\log D)
\end{equation}
be the ${\mathcal O}_X$--linear homomorphism defined by 
$$
\gamma(x)(v)\,=\, \gamma_0(v)(x) \, \in\, TX(-\log D)_x \, , \ \ \forall\ x\, \in\, X\, ,
\ v\, \in\, \mathfrak g\, .
$$

\begin{lemma}\label{lem1}
The homomorphism $\gamma$ in \eqref{e15} is an isomorphism.
\end{lemma}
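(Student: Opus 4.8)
The plan is to show that the $\mathcal{O}_X$-linear homomorphism $\gamma$ is an isomorphism of vector bundles by exploiting the trivializations on both sides together with the transitivity of the $\mathbb{G}$-action. Both $\mathcal{G} = X \times \mathfrak{g}$ and $TX(-\log D)$ are trivial vector bundles of the same rank $d = \dim_{\mathbb{C}} X$ (the source by construction, the target by the standing assumption in Section~\ref{se3.1}), so it suffices to prove that $\gamma$ is an isomorphism fiberwise, i.e. that the linear map $\gamma(x) : \mathfrak{g} \longrightarrow TX(-\log D)_x$ is an isomorphism at every point $x \in X$. Since source and target have equal dimension, I would reduce this further to checking that $\gamma(x)$ is either injective or surjective at each point.

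**First I would** treat the generic points $x \in X_0 = X \setminus D$. Over $X_0$ we have $TX(-\log D)\vert_{X_0} = TX\vert_{X_0}$, and $\gamma(x)$ is the evaluation-at-$x$ of the fundamental vector fields of the $\mathbb{G}$-action. Because $\mathbb{G}$ acts transitively on $X_0$, the evaluation map $\mathfrak{g} \longrightarrow T_x X$ is surjective for every $x \in X_0$; and since the isotropy subgroup at each point of $X_0$ is discrete, its Lie algebra is zero, so the kernel of $\gamma(x)$ is trivial. Hence $\gamma(x)$ is an isomorphism for all $x \in X_0$, which shows $\gamma$ is an isomorphism over the dense open set $X_0$.

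**The hard part will be** establishing the isomorphism along the divisor $D$ itself, where $\gamma(x)$ lands in the logarithmic fiber rather than the ordinary tangent space. The approach here is to argue that the determinant of $\gamma$, a holomorphic section of the line bundle $\mathcal{H}om(\bigwedge^d \mathcal{G}, \bigwedge^d TX(-\log D)) \cong \det TX(-\log D)$, is nowhere vanishing. Since $\det TX(-\log D)$ is itself holomorphically trivial (as $TX(-\log D)$ is trivial), this determinant is a global holomorphic function on the compact connected manifold $X$, hence constant by the maximum principle. By the previous paragraph this constant is nonzero on $X_0$, so it is a nonzero constant everywhere, and therefore $\gamma(x)$ is an isomorphism at every point of $X$, including along $D$.

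**Alternatively, and perhaps more cleanly,** I would make the computation explicit in the local coordinates of \eqref{e1}. In a chart where $D \cap U = \{z_1 = \cdots = z_k = 0\}$, the fiber of $TX(-\log D)$ is freely generated by $z_1 \partial_{z_1}, \ldots, z_k \partial_{z_k}, \partial_{z_{k+1}}, \ldots, \partial_{z_d}$, and one would verify that the fundamental vector fields coming from $\gamma_0(\mathfrak{g})$, which lie in $H^0(X,\, TX(-\log D))$ by \eqref{e19}, still span this fiber along $D$. The essential input is that $\mathbb{G}$ acts on $X$ preserving $D$, so the log-tangent directions at a boundary point are precisely the infinitesimal directions of the $\mathbb{G}$-action tangent to the strata of $D$; combined with the transitivity on $X_0$ and the semi-torus structure of the isotropies recalled above, this forces the spanning to persist in the limit. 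I expect the determinant argument via compactness to be the shortest route, with the coordinate computation serving as the conceptual justification that $\gamma$ does not degenerate on the boundary.
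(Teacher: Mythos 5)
Your proposal is correct and takes essentially the same route as the paper: an isomorphism fiberwise over $X_0$ (transitivity gives surjectivity, discrete isotropy gives injectivity), followed by the determinant argument --- $\bigwedge^d\gamma$ is a global holomorphic function on the compact connected $X$, nonvanishing on $X_0$, hence a nonzero constant, so $\gamma$ is an isomorphism everywhere. The only quibbles are cosmetic: the rank of $\mathcal{G}$ equalling $d$ is a consequence of your second paragraph rather than true \emph{by construction}, and the sketched coordinate ``alternative'' along $D$ is unnecessary once the determinant argument is in place.
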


\begin{proof}
Since $\mathbb G$ acts transitively on $X\setminus D$, we have $\dim {\mathfrak g}\, \geq\,
\dim X\,=\, d$. Take any $x\in\, X_0\,=\,X\setminus D$. The isotropy subgroup of $\mathbb G$
for $x$ is discrete, and hence the homomorphism
$$
\gamma(x)\, :\, {\mathfrak g}\, \longrightarrow\, T_xX
$$
is injective. Now from the above inequality $\dim {\mathfrak g}\, \geq\, \dim X$ it follows immediately
that $\gamma(x)$ is an isomorphism if $x\in\, X_0$. On the other hand, both the
vector bundles ${\mathcal G}$ and $TX(-\log D)$ in \eqref{e15} are holomorphically
trivial, and $X$ is compact. From these it follows that $\gamma$ is an isomorphism. To see this, consider the
homomorphism of top exterior products
$$
\bigwedge\nolimits^d\gamma\, :\, {\mathcal O}_X \, =\, \bigwedge\nolimits^d {\mathcal G}
\, \longrightarrow\, \bigwedge\nolimits^d TX(-\log D)\,=\, {\mathcal O}_X
$$
induced by $\gamma$ in \eqref{e15}, where $d\, =\, \dim_{\mathbb C} X$. Any homomorphism ${\mathcal O}_X\, 
\longrightarrow\,{\mathcal O}_X$ is given by a globally defined holomorphic function on $X$. From the above 
observation that $\gamma$ is an isomorphism over $X_0$ it follows immediately that $\bigwedge\nolimits^d\gamma$ is 
an isomorphism over $X\setminus D$. Therefore, we conclude that $\bigwedge\nolimits^d\gamma$ corresponds to a 
nowhere zero holomorphic function on $X$. This implies that $\bigwedge\nolimits^d\gamma$ is an isomorphism over 
entire $X$. From this it follows immediately that $\gamma$ is an isomorphism over $X$.
\end{proof}

\begin{corollary}\label{cor1}
The homomorphism $$\gamma_0\, :\, {\mathfrak g}\, \, \longrightarrow\, H^0(X,\, TX(-\log D))$$
in \eqref{e19} is an isomorphism.
\end{corollary}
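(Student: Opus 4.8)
The plan is to deduce the statement directly from Lemma \ref{lem1} by passing to spaces of global holomorphic sections. The starting observation is that an isomorphism of holomorphic vector bundles induces an isomorphism on global sections, so from Lemma \ref{lem1} the induced map
$$
H^0(X,\, \gamma)\, :\, H^0(X,\, {\mathcal G})\, \longrightarrow\, H^0(X,\, TX(-\log D))
$$
is an isomorphism. It then remains to identify the source with $\mathfrak g$ and to recognize that under this identification $H^0(X,\,\gamma)$ is exactly $\gamma_0$.

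For the identification of the source, I would use that ${\mathcal G}\,=\,X\times{\mathfrak g}$ is the trivial bundle with fiber $\mathfrak g$, so a global holomorphic section is the same as a holomorphic map $X\,\longrightarrow\,{\mathfrak g}$. Since $X$ is compact and connected, every holomorphic function on $X$ is constant (apply the maximum principle to each coordinate of such a map with respect to a basis of $\mathfrak g$); hence every such map is constant. This gives a canonical identification $H^0(X,\,{\mathcal G})\,=\,{\mathfrak g}$, where $v\,\in\,{\mathfrak g}$ corresponds to the constant section $x\,\longmapsto\, v$.

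Finally I would verify that $H^0(X,\,\gamma)$ agrees with $\gamma_0$ under this identification. By the definition of $\gamma$ in \eqref{e15}, the constant section $x\,\longmapsto\,v$ is carried by $\gamma$ to the section $x\,\longmapsto\,\gamma(x)(v)\,=\,\gamma_0(v)(x)$, which is precisely $\gamma_0(v)\,\in\, H^0(X,\, TX(-\log D))$. Therefore $\gamma_0\,=\,H^0(X,\,\gamma)$, and since the latter is an isomorphism the corollary follows. The argument is short given Lemma \ref{lem1}; the only point requiring care is the computation $H^0(X,\,{\mathcal G})\,=\,{\mathfrak g}$, which rests on the compactness and connectedness of $X$, and this is routine rather than a genuine obstacle.
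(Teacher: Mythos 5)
Your proposal is correct and follows essentially the same route as the paper: both deduce the corollary from Lemma \ref{lem1} by passing to global sections, identifying $H^0(X,\,{\mathcal G})$ with $\mathfrak g$ (which the paper asserts without the maximum-principle justification you supply), and observing that the induced map on sections coincides with $\gamma_0$.
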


\begin{proof}
The global holomorphic sections of the two holomorphic vector bundles ${\mathcal G}$ and 
$TX(-\log D)$ are ${\mathfrak g}$ and $H^0(X,\, TX(-\log D))$ respectively. The homomorphism $\gamma_0$
evidently coincides with the homomorphism of global sections
$$
H^0(X,\, {\mathcal G})\, \longrightarrow\, H^0(X,\, TX(-\log D))
$$
given by $\gamma$ in \eqref{e15}. But $\gamma$ is an isomorphism by Lemma \ref{lem1}.
Consequently, $\gamma_0$ is an isomorphism.
\end{proof}

\subsection{Equivariant bundles}

Let $M$ be a connected complex Lie group and
$$
\rho\, :\, M\, \longrightarrow\, {\mathbb G}
$$
a surjective holomorphic homomorphism, where $\mathbb G$ is the group in Section
\ref{se3.1}. Let $H$ be a \textit{connected} complex Lie group.

\begin{definition}\label{def0}
A $\rho$--\textit{equivariant} principal $H$--bundle is a pair $(E_H,\, \delta)$,
where $E_H$ as in \eqref{e3} is a holomorphic principal $H$--bundle over $X$, and
$$
\delta\, :\, M\times E_H\, \longrightarrow\, E_H
$$
is a left--action of the group $M$ on $E_H$, such that the following conditions hold:
\begin{enumerate}
\item the action of $M$ is holomorphic, meaning the map $\delta$ is holomorphic,

\item the actions of $M$ and $H$ on $E_H$ commute,

\item for any $(y,\, z)\, \in\, M\times E_H$,
\begin{equation}\label{e16}
p(\delta(y,z)) \,=\, \rho(y)(p(z))\, ,
\end{equation}
where $p$ is the projection in \eqref{e3} (recall that ${\mathbb G}\, \subset\,
{\rm Aut}(X)$).
\end{enumerate}
\end{definition}

\section{A criterion for equivariance}

\begin{theorem}\label{thm1}
\mbox{}
\begin{enumerate}
\item{} Let $(E_H,\, \delta)$ be a $\rho$--equivariant holomorphic principal $H$--bundle, where 
$\rho\, :\, M\, \longrightarrow\, {\mathbb G}$ is a surjective holomorphic homomorphism of connected
complex Lie groups. Then $E_H$ admits a logarithmic connection singular over $D$.

\item{} Let $E_H$ be a holomorphic principal $H$--bundle over $X$ admitting a logarithmic
connection singular over $D$. Then there is connected complex Lie group $M$ and a surjective
holomorphic homomorphism $\rho\, :\, M\, \longrightarrow\, {\mathbb G}$, such that there is a
left--action $\delta$ of $M$ on $E_H$ with the property that $(E_H,\, \delta)$
is a $\rho$--equivariant holomorphic principal $H$--bundle.
\end{enumerate}
\end{theorem}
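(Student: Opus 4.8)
The plan is to move between the $M$-action and the connection using the identification of $H^0(X,\,{\rm At}(E_H)(-\log D))$ with the space of $H$-invariant sections of $TE_H(-\log\widehat D)$, i.e.\ $H$-invariant logarithmic vector fields on $E_H$. For part (1), I would differentiate $\delta$: each $\widetilde v\,\in\,{\mathfrak m}\,:=\,\text{Lie}(M)$ gives a fundamental vector field on $E_H$ which is $H$-invariant (the $M$- and $H$-actions commute) and tangent to $\widehat D\,=\,p^{-1}(D)$ in the logarithmic sense, because $\delta$ covers through $\rho$ the $D$-preserving action of $\mathbb G$ and hence preserves $\widehat D$. Passing to the $H$-quotient gives a linear map $\sigma_0\,:\,{\mathfrak m}\,\longrightarrow\,H^0(X,\,{\rm At}(E_H)(-\log D))$ with $\beta\circ\sigma_0\,=\,\gamma_0\circ d\rho$, where $\gamma_0$ is the isomorphism of Corollary \ref{cor1}. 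Choosing a linear splitting $s\,:\,{\mathfrak g}\,\longrightarrow\,{\mathfrak m}$ of the surjection $d\rho$ and putting $\sigma\,:=\,\sigma_0\circ s$, I obtain $\beta\circ\sigma\,=\,\gamma_0$. The trivialization $\gamma\,:\,X\times{\mathfrak g}\,\longrightarrow\,TX(-\log D)$ of Lemma \ref{lem1} then yields a logarithmic connection by the fibrewise formula $\varphi(w)\,=\,\sigma(\gamma(x)^{-1}w)(x)$ for $w\,\in\,TX(-\log D)_x$: it is ${\mathcal O}_X$-linear as a composition of ${\mathcal O}_X$-linear maps, and $\beta\circ\varphi\,=\,\text{Id}$ follows from $\beta\circ\sigma\,=\,\gamma_0$ together with $\gamma_0(v)(x)\,=\,\gamma(x)(v)$. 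That $s$ need not be a Lie algebra map is harmless, since a connection is only required to split $\beta$.

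For part (2), set $V\,:=\,H^0(X,\,{\rm At}(E_H)(-\log D))$, a finite-dimensional Lie algebra (finite-dimensional since $X$ is compact, and closed under bracket as recorded before the statement). Taking global sections in \eqref{e8} gives a left-exact sequence in which $H^0(X,\,\text{ad}(E_H))$ is the kernel of the induced map $V\,\longrightarrow\,H^0(X,\,TX(-\log D))$; the given connection $\varphi$ splits this map, so it is a surjective homomorphism of Lie algebras with ideal $H^0(X,\,\text{ad}(E_H))$, and by Corollary \ref{cor1} its target is $\gamma_0({\mathfrak g})\,\cong\,{\mathfrak g}$. I would take $M$ to be the connected, simply connected complex Lie group with Lie algebra $V$ and integrate the vector fields of $V$ on $E_H$ to a holomorphic action $\delta$ of $M$. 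The $H$-invariance of $V$ makes $\delta$ commute with the $H$-action; since $V$ consists of logarithmic fields, $\delta$ preserves $\widehat D$; and as $M$ commutes with $H$, the action descends to $X\,=\,E_H/H$, with infinitesimal generators the image $\gamma_0({\mathfrak g})$, whose flows preserve $D$, so by compactness of $X$ and connectedness of $M$ the descended action lands in $\mathbb G$ and defines $\rho\,:\,M\,\longrightarrow\,\mathbb G$ with $d\rho$ the surjection $V\,\longrightarrow\,{\mathfrak g}$; surjectivity of $\rho$ follows. Finally \eqref{e16} holds by construction.

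The main obstacle is the integration step, which needs every vector field in $V$ to be complete before the holomorphic analogue of Palais' integrability theorem can be applied. The key point is that an $H$-invariant holomorphic vector field $\xi$ on $E_H$ whose projection $dp(\xi)$ is complete on the compact base $X$ is itself complete: over a finite trivializing cover $E_H\vert_{U_i}\,\cong\,U_i\times H$ one has $\xi(x,\,h)\,=\,(dp(\xi)(x),\,(R_h)_*\theta_i(x))$ for holomorphic $\theta_i\,:\,U_i\,\longrightarrow\,{\mathfrak h}$, so an integral curve solves $\dot x\,=\,dp(\xi)(x)$, globally defined because $dp(\xi)$ is complete and $X$ is compact, coupled with a time-dependent right-invariant ODE on $H$, which has solutions for all time on any Lie group. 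Since $\beta(\xi)\,\in\,\gamma_0({\mathfrak g})$ is always a fundamental vector field of the $\mathbb G$-action on the compact $X$, and hence complete, this applies to every $\xi\,\in\,V$. This is precisely where compactness of $X$ and the triviality of $TX(-\log D)$ (through Corollary \ref{cor1}) enter, and it is the part demanding the most care.
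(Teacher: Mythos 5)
Your part (1) is essentially the paper's own argument: differentiate $\delta$ to land in $H^0(X,\,{\rm At}(E_H)(-\log D))$, pick a $\mathbb C$--linear splitting of the surjection onto ${\mathfrak g}\,\cong\, H^0(X,\,TX(-\log D))$, and use the triviality of $TX(-\log D)$ to promote the resulting map on global sections to a bundle map; your fibrewise formula $\varphi(w)\,=\,\sigma(\gamma(x)^{-1}w)(x)$ is the same device as the paper's $\widetilde{\xi}$ in \eqref{wxi}.

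For part (2) you take a genuinely different, and correct, route. The paper realizes the acting group concretely as ${\mathbb A}^0_D$, the identity component of the group of $H$--equivariant biholomorphisms of $E_H$ covering $D$--preserving automorphisms of $X$; it asserts as known that this is a complex Lie group with Lie algebra $H^0(X,\,{\rm At}(E_H)(-\log D))$, and then uses the connection exactly as you do --- to split $\beta_*$ in \eqref{bts} on global sections --- to show that $\theta$ in \eqref{e23} is surjective on Lie algebras, hence surjective since $\mathbb G$ is connected; the action is then the tautological one \eqref{ta}, so the conditions of Definition \ref{def0} are automatic. You instead build the group abstractly: $M$ is the simply connected complex Lie group with Lie algebra $V\,=\,H^0(X,\,{\rm At}(E_H)(-\log D))$, integrated to an action on $E_H$ via the holomorphic Palais theorem. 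The substantive input there is completeness of every element of $V$, and your lemma --- an $H$--invariant field projecting to a complete field on the compact base is itself complete, because in a local trivialization the fibre component satisfies a time-dependent right-invariant ODE on $H$, which cannot blow up in finite time --- is correct and is exactly where compactness of $X$ enters; note only that the holomorphic Palais theorem needs completeness in complex time, which follows because $V$ is a complex subspace, so $\xi$ and $i\xi$ are both real-complete and their flows commute. In effect you prove by hand the integration statement that the paper outsources to the (cited-as-known, unproved) Lie group structure on ${\mathbb A}_D$; the price is that you must also check, as you do, that the integrated action commutes with $H$, preserves $\widehat{D}$, and descends to an action on $X\,=\,E_H/H$ landing in $\mathbb G$, with $\rho$ holomorphic and surjective --- surjectivity coming from the connection splitting $d\rho\,=\,\beta_*\vert_V$ together with connectedness of $\mathbb G$, which is the same use of the connection as in the paper. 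Your approach buys self-containedness (the analytic core is proved rather than quoted) and a canonical simply connected $M$; the paper's buys brevity, since for the tautological action all equivariance conditions are immediate.
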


\begin{proof}
To prove (1), let $(E_H,\, \delta)$ be a $\rho$--equivariant holomorphic principal $H$--bundle,
where $\rho\, :\, M\, \longrightarrow\, {\mathbb G}$ is a surjective
holomorphic homomorphism of connected
complex Lie groups. The Lie algebra of $M$ will be denoted by $\mathfrak m$. Let
$$
\phi\, :\, {\mathfrak m}\, \longrightarrow\, H^0(E_H,\, TE_H)
$$
be the homomorphism given by the action $\delta$ of $M$ on $E_H$. From the given condition, that
the actions of $M$ and $H$ on $E_H$ commute, it follows immediately that
$$
\phi({\mathfrak m})\, \subset\, H^0(E_H,\, TE_H)^H\, ,
$$
where $H^0(E_H,\, TE_H)^H\, \subset\, H^0(E_H,\, TE_H)$ is the space of $H$--invariant holomorphic
vector fields on $E_H$ for the natural action of $H$ on $E_H$. From the definition ${\rm At}(E_H)
\,:=\, (TE_H)/H$ it evidently follows that
$$
H^0(E_H,\, TE_H)^H\,=\, H^0(X, \, {\rm At}(E_H))\, ,
$$
so we have $\phi({\mathfrak m})\, \subset\, H^0(X, \, {\rm At}(E_H))$. Since
the action of $\mathbb G$ on $X$, by definition, preserves $D$, from \eqref{e16}
it follows that the action of $M$ on $E_H$ preserves the inverse image
$\widehat{D}\,=\, p^{-1}(D)$ in \eqref{e5}. Hence we have
$$
\phi\, :\, {\mathfrak m}\, \longrightarrow\, H^0(E_H,\, TE_H(-\log \widehat{D}))^H
\, \subset\, H^0(E_H,\, TE_H)^H\, .
$$
From \eqref{e6} it now follows that
\begin{equation}\label{e17}
\phi({\mathfrak m}) \, \subset\, H^0(X, \, {\rm At}(E_H)(-\log D))
\, \subset\, H^0(X, \, {\rm At}(E_H))\, .
\end{equation}

Let
\begin{equation}\label{rp}
\rho'\, :\, {\mathfrak m}\, \longrightarrow\, {\mathfrak g}
\end{equation}
be the homomorphism
of Lie algebras for the homomorphism $\rho$ of Lie groups. From \eqref{e16} it follows
that for all $w\, \in\, \mathfrak m$, we have
$$
\beta \circ\phi(w)\,=\, \gamma_0\circ\rho'(w)\, \in\, H^0(X,\, TX(-\log D))\, ,
$$
where $\beta$ and $\gamma_0$ are the homomorphisms in \eqref{e8} and
Corollary \ref{cor1} respectively.
The homomorphism $\rho'$ is surjective because $\rho$ is so. Combining this with Corollary \ref{cor1}
it follows that the homomorphism
\begin{equation}\label{e29}
\gamma_0\circ\rho'\, :\, {\mathfrak m}\, \longrightarrow\, H^0(X,\, TX(-\log D))
\end{equation}
is surjective. Fix a $\mathbb C$--linear map
\begin{equation}\label{xi}
\xi\, :\, H^0(X,\, TX(-\log D))\, \longrightarrow\, {\mathfrak m}
\end{equation}
such that
\begin{equation}\label{e20}
(\gamma_0\circ\rho')\circ\xi\,=\, \text{Id}_{H^0(X, TX(-\log D))}\, .
\end{equation}
Now consider the composition
\begin{equation}\label{e22}
\phi\circ\xi\, :\, H^0(X,\, TX(-\log D))\, \longrightarrow\, H^0(X, \, {\rm At}(E_H)(-\log D))\, ,
\end{equation}
where $\phi$ and $\xi$ are the maps in \eqref{e17} and \eqref{xi} respectively. From \eqref{e20}
it follows that
\begin{equation}\label{e21}
\beta_* \circ (\phi\circ\xi)\,=\, \text{Id}_{H^0(X, TX(-\log D))}\, ,
\end{equation}
where
\begin{equation}\label{bts}
\beta_*\, :\, H^0(X, \, {\rm At}(E_H)(-\log D))\,\longrightarrow\, H^0(X,\, TX(-\log D))
\end{equation}
is the homomorphism of global sections induced by the map $\beta$ of vector bundles in \eqref{e8}.

Since the vector bundle $TX(-\log D)$ is holomorphically trivial, the map $\phi\circ\xi$
in \eqref{e22} defines a homomorphism
\begin{equation}\label{wxi}
\widetilde{\xi}\, :\, TX(-\log D)\, \longrightarrow\, {\rm At}(E_H)(-\log D)\, .
\end{equation}
To construct this map $\widetilde{\xi}$,
for any $x\,\in\, X$ and $v\,\in\, TX(-\log D)_x$, let $$\widetilde{v}\,
\in\, H^0(X,\, TX(-\log D))$$ be the unique holomorphic section such that $\widetilde{v}(x)\,=\, v$. The map
$\widetilde{\xi}$ sends $v$ to $\phi\circ\xi(\widetilde{v})(x)\,\in\, {\rm At}(E_H)
(-\log D)_x$. From \eqref{e21} it follows immediately that
$$
\beta\circ \widetilde{\xi}\,=\, \text{Id}_{TX(-\log D)}\, .
$$
In other words, $\widetilde{\xi}$ is a logarithmic connection on $E_H$ singular over $D$.
This proves (1) in the theorem.

We shall now prove (2) in the theorem.

Let $\mathbb A$ denote the space of all pairs of the form $(\tau,\, f)$, where
$\tau\, :\, X\, \longrightarrow\, X$ is a biholomorphism and
$$
f\, :\, E_H\, \longrightarrow\, E_H
$$
is a biholomorphism such that
\begin{itemize}
\item $\tau\circ p\,\,=\, p\circ f$, where $p$ is the projection in \eqref{e3}, and

\item $f(q'(z,h))\,=\, q'(f(z),h)$, for all $(z,\, h)\, \in\, E_H\times H$, where $q'$ is
the action in \eqref{e4} (in other words, $f$ is $H$--equivariant).
\end{itemize}
We note that $\mathbb A$ is a group, with group operation map and inverse map respectively given by
$$
(\tau_1,\, f_1)\cdot (\tau_2,\, f_2)\,=\, (\tau_1\circ\tau_2,\, f_1\circ f_2)\ \
\text{ and }\ \ (\tau,\, f)^{-1}\,=\, (\tau^{-1},\, f^{-1})\, .
$$
This $\mathbb A$ is a complex Lie group with Lie algebra $H^0(X,\, \text{At}(E_H))$
(the Lie algebra structure on $H^0(X,\, \text{At}(E_H))$ is given by the
Lie bracket of vector fields).

Let
$$
{\mathbb A}_D\, \subset\, \mathbb A
$$
denote the subgroup consisting of all $(\tau,\, f)$ of the above type such that
$\tau(D)\,=\, D$. It is a complex Lie subgroup with Lie algebra
$H^0(X,\, \text{At}(E_H)(-\log D))$. Let
$$
{\mathbb A}^0_D\, \subset\, {\mathbb A}_D
$$
be the connected component containing the identity element. Define a homomorphism
\begin{equation}\label{e23}
\theta\, :\, {\mathbb A}^0_D\, \longrightarrow\, {\mathbb G}\, ,\ \ (\tau,\, f)\, \longmapsto
\, \tau\, ,
\end{equation}
where $\mathbb G$ is the group defined in Section \ref{se3.1}.

Now assume that $E_H$ admits a logarithmic connection singular over $D$. We shall show that
the homomorphism $\theta$ in \eqref{e23} is surjective.

To prove that $\theta$ is surjective,
fix a logarithmic connection $$\varphi\, :\, TX(-\log D) \, \longrightarrow\, \text{At}(E_H)
(-\log D)$$ on $E_H$ singular over $D$. Let
\begin{equation}\label{e24}
\varphi_*\, :\, H^0(X,\, TX(-\log D)) \, \longrightarrow\, H^0(X,\, \text{At}(E_H)(-\log D))
\end{equation}
be the homomorphism of global sections induced by $\varphi$.

Let
\begin{equation}\label{e25}
\theta'\, :\, \text{Lie}({\mathbb A}^0_D)\,=\, H^0(X,\, \text{At}(E_H)(-\log D))
\, \longrightarrow\, {\mathfrak g}
\end{equation}
be the homomorphism of Lie algebras associated to the homomorphism $\theta$ in
\eqref{e23}. To prove that $\theta$ is surjective it suffices to show that $\theta'$ is
surjective, because $\mathbb G$ is connected.

Now consider the composition
$$
\theta'\circ\varphi_*\, :\, H^0(X,\, TX(-\log D)) \, \longrightarrow\, \mathfrak g\, ,
$$
where $\varphi_*$ and $\theta'$ are constructed in \eqref{e24} and \eqref{e25}
respectively. From the constructions of $\theta'\circ\varphi_*$ and the map $\gamma_0$
(see Corollary \ref{cor1}) it follow immediately that
$$
(\theta'\circ\varphi_*)\circ \gamma_0\,=\, \text{Id}_{\mathfrak g}\, .
$$
On the other hand, from Corollary \ref{cor1} we know that the homomorphism
$\gamma_0$ is an isomorphism. Hence $\theta'\circ\varphi_*$ is also an isomorphism. This
implies that $\theta'$ is surjective. As noted before, the surjectivity of the
homomorphism $\theta$ in \eqref{e23} follows from the surjectivity of $\theta'$.

The group ${\mathbb A}^0_D$ in \eqref{e23} has a tautological holomorphic action on $E_H$ 
(recall that it is a subgroup of the automorphism group of the complex manifold $E_H$); let
\begin{equation}\label{ta}
{\mathbb T}_D\, :\, {\mathbb A}^0_D\times E_H\, \longrightarrow\, E_H
\end{equation}
be this tautological action of ${\mathbb A}^0_D$ on $E_H$. The
pair $(E_H,\, {\mathbb T}_D)$ is evidently a $\theta$--equivariant holomorphic principal
$H$--bundle, where $\theta$ is the homomorphism in \eqref{e23}. This completes the proof of (2).
\end{proof}

As in Theorem \ref{thm1}(1), let $(E_H,\, \delta)$ be a $\rho$--equivariant holomorphic 
principal $H$--bundle, where $\rho\, :\, M\, \longrightarrow\, {\mathbb G}$ is a surjective 
holomorphic homomorphism of connected complex Lie groups. Fix a homomorphism $\xi$ as in
\eqref{xi} satisfying the condition in \eqref{e20}. Construct the homomorphism
$\widetilde{\xi}$ in \eqref{wxi} using $\xi$. It was shown in the proof of
Theorem \ref{thm1}(1) that $\widetilde{\xi}$ is a logarithmic connection on $E_H$ singular
on $D$. Let
\begin{equation}\label{kxi}
{\mathbb K}(\widetilde{\xi})\, \in\, H^0(X,\, \Omega^2_X(\log D)\otimes {\rm ad}(E_H))
\end{equation}
be the curvature of the logarithmic connection on $E_H$ defined by $\widetilde{\xi}$;
curvature was defined in \eqref{e13}.

We shall compute ${\mathbb K}(\widetilde{\xi})$ in \eqref{kxi}. For that, consider the linear
map
\begin{equation}\label{k0}
K_0\, :\, \bigwedge\nolimits^2 H^0(X,\, TX(-\log D)) \, \longrightarrow\, {\mathfrak m}\, ,
\ \ v\wedge w \, \longmapsto\, [\xi(v),\, \xi(w)] -\xi([v,\, w])\, ;
\end{equation}
so $K_0$ measures how the $\mathbb C$--linear homomorphism $\xi$ fails to be a homomorphism
of Lie algebras. We have the composition homomorphism
$$
(\beta_*\circ\phi)\circ K_0\, :\, \bigwedge\nolimits^2 H^0(X,\, TX(-\log D)) \, \longrightarrow\,
H^0(X,\, TX(-\log D))\, ,
$$
where $\beta_*$ is the homomorphism in \eqref{bts} of global sections given
by $\beta$ in \eqref{e8}, and $\phi$ is the homomorphism in \eqref{e17}. From \eqref{e20}, and the
fact that both $\phi$ and $\beta$ are Lie algebra structure preserving, it
follows immediately that
$$
(\beta_*\circ\phi)\circ K_0\,=\, 0\, .
$$
Hence from the short exact sequence in \eqref{e8} it now follows that
$$
\phi\circ K_0(\bigwedge\nolimits^2 H^0(X,\, TX(-\log D)))\, \subset\, H^0(X,\, \text{ad}(E_H))
$$
$$
\subset\, H^0(X, \, {\rm At}(E_H)(-\log D))\, .
$$
Recall that the holomorphic vector bundle $TX(-\log D)$ is holomorphically trivial. So
the holomorphic vector bundle $\bigwedge\nolimits^2 (TX(-\log D))$ is also
holomorphically trivial, and, moreover, we have
$$
\bigwedge\nolimits^2 H^0(X,\, TX(-\log D))\,=\, H^0(X,\, \bigwedge\nolimits^2 (TX(-\log D)))\, .
$$
So the above homomorphism $\phi\circ K_0$ can be considered as a homomorphism
\begin{equation}\label{e26}
\phi\circ K_0\, :\, H^0(X,\, \bigwedge\nolimits^2 (TX(-\log D)))\, \longrightarrow\,
H^0(X,\, \text{ad}(E_H))\, .
\end{equation}
Since $\bigwedge\nolimits^2 (TX(-\log D))$ is holomorphically trivial, the homomorphism
$\phi\circ K_0$ in \eqref{e26} produces a homomorphism of coherent analytic sheaves
$$
(\phi\circ K_0)'\, :\, \bigwedge\nolimits^2 (TX(-\log D))\, \longrightarrow\,
\text{ad}(E_H)
$$
as follows: for any $x\, \in\, X$ and $w \, \in\, \bigwedge\nolimits^2 (TX(-\log D))_x$, let
$\widetilde{w}$ be the unique element of $H^0(X,\, \bigwedge\nolimits^2 (TX(-\log D)))$ satisfying
the condition that $\widetilde{w}(x)\,=\, w$. Now set
$$
(\phi\circ K_0)'(x)(w)\,=\, (\phi\circ K_0)(\widetilde{w})(x)\, \in\, \text{ad}(E_H)_x\, .
$$
The above homomorphism $(\phi\circ K_0)'$ defines an element
\begin{equation}\label{e27}
\sigma\, \in\, H^0(X,\, \text{Hom}(\bigwedge\nolimits^2 (TX(-\log D)),\, \text{ad}(E_H)))\,=\,
H^0(X,\, \Omega^2_X(\log D)\otimes {\rm ad}(E_H))\, .
\end{equation}

{}From the definition of ${\mathbb K}(\widetilde{\xi})$ in \eqref{kxi} (see \eqref{e13}) and
the construction of $\sigma$ in \eqref{e27} it is straightforward to check that
\begin{equation}\label{e28}
\sigma\, =\, {\mathbb K}(\widetilde{\xi})\, .
\end{equation}

The residue of the logarithmic connection $\widetilde\xi$ (constructed
in \eqref{e12}) can also be described in terms of $\xi$.

\begin{lemma}\label{lem2}
Let $(E_H,\, \delta)$ be a $\rho$--equivariant holomorphic
principal $H$--bundle, where $\rho\, :\, M\, \longrightarrow\, {\mathbb G}$ is a surjective
holomorphic homomorphism of connected complex Lie groups, such that the homomorphism
$\gamma_0\circ\rho'$ in \eqref{e29} is an isomorphism. Then $E_H$ admits a flat holomorphic
connection singular over $D$ which is preserved by the action $\delta$ of $M$ on $E_H$.
\end{lemma}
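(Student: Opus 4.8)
The plan is to run the construction of Theorem \ref{thm1}(1) with the \emph{distinguished} choice of splitting
$\xi\,:=\,(\gamma_0\circ\rho')^{-1}$, which is available precisely because $\gamma_0\circ\rho'$ in \eqref{e29} is now assumed to be an isomorphism. First I would note that, since $\gamma_0$ is an isomorphism by Corollary \ref{cor1}, the hypothesis forces the Lie algebra homomorphism $\rho'$ in \eqref{rp} to be an isomorphism as well. The map $\xi$ above then satisfies \eqref{e20} tautologically, so the construction in \eqref{wxi} yields a logarithmic connection $\widetilde{\xi}\, :\, TX(-\log D)\, \longrightarrow\, {\rm At}(E_H)(-\log D)$ on $E_H$ singular over $D$. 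It remains to show that this particular $\widetilde{\xi}$ is flat and is preserved by the action $\delta$ of $M$.

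For flatness I would invoke the curvature computation already carried out in \eqref{k0}--\eqref{e28}. Both $\gamma_0$ and $\rho'$ preserve Lie brackets, so $\gamma_0\circ\rho'$ is a Lie algebra isomorphism; hence its inverse $\xi$ is itself a homomorphism of Lie algebras. But the map $K_0$ in \eqref{k0} measures exactly the failure of $\xi$ to preserve brackets, so $K_0$ vanishes identically. By the construction of $\sigma$ in \eqref{e27} together with the identification $\sigma\,=\,{\mathbb K}(\widetilde\xi)$ in \eqref{e28}, this gives ${\mathbb K}(\widetilde\xi)\,=\,0$, i.e. the connection $\widetilde\xi$ is flat.

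For the $M$--invariance I would first identify the horizontal distribution of $\widetilde\xi$. Since $\xi$ is an isomorphism onto $\mathfrak m$ and $TX(-\log D)$ is holomorphically trivial, evaluation at a point $x$ shows that the image of $\widetilde\xi_x$ equals $\{\phi(w)(x)\,:\,w\in\mathfrak m\}$; lifting to $E_H$, the horizontal subbundle of $\widetilde\xi$ inside $TE_H(-\log\widehat D)$ is spanned pointwise by the fundamental vector fields $\phi(\mathfrak m)$ of the $M$--action (these are $H$--invariant because the actions of $M$ and $H$ commute, so the argument descends correctly to ${\rm At}(E_H)(-\log D)$). For a left action the fundamental vector fields transform as $(\delta_m)_*\phi(w)\,=\,\phi(\text{Ad}(m)w)$ for every $m\in M$ and $w\in\mathfrak m$; since $\text{Ad}(m)$ is an automorphism of $\mathfrak m$, the push-forward $(\delta_m)_*$ merely permutes the generators and therefore sends the horizontal subspace at $z$ to the horizontal subspace at $\delta(m,z)$. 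This is exactly the assertion that $\widetilde\xi$ is preserved by $\delta$.

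The computational core is thus already supplied by \eqref{k0}--\eqref{e28}, and the two genuinely new inputs are structural: that the distinguished $\xi$ is a Lie algebra homomorphism (forcing $K_0=0$ and hence flatness) and that the horizontal distribution coincides with the span of the $M$--fundamental vector fields (forcing $\delta$--invariance via the transformation rule for fundamental vector fields). I expect the step requiring the most care to be the $M$--invariance, specifically verifying that the horizontal subbundle equals $\phi(\mathfrak m)$ pointwise and keeping track of the $H$--invariance so that the conclusion is drawn on ${\rm At}(E_H)(-\log D)$ rather than only on $TE_H(-\log\widehat D)$.
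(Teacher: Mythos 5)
Your proposal is correct and follows essentially the same route as the paper's proof: the same distinguished choice $\xi=(\gamma_0\circ\rho')^{-1}$, flatness via the vanishing of $K_0$ in \eqref{k0} combined with \eqref{e28}, and $M$--invariance from the fact that the image of $\xi$ is all of $\mathfrak m$ and hence $\mathrm{Ad}(M)$--invariant. Your third paragraph merely spells out, via the transformation rule for fundamental vector fields, the step the paper calls ``straightforward to deduce,'' which is a welcome elaboration rather than a different argument.
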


\begin{proof}
Set $\xi$ in \eqref{xi} to be the inverse of the isomorphism $\gamma_0\circ\rho'$. Note that 
as $\gamma_0\circ\rho'$ is an isomorphism, the condition in \eqref{e20} forces $\xi$ to be 
the inverse of $\gamma_0\circ\rho'$. Therefore, $\xi$ is an isomorphism of Lie algebras. Hence 
$K_0$ in \eqref{k0} vanishes. Now from \eqref{e28} it follows that connection $\widetilde\xi$ in 
\eqref{wxi} is flat. Since the image of $\xi$ in $\mathfrak m$ is preserved by the adjoint 
action of $M$ on $\mathfrak m$ (in fact the image of $\xi$ is entire $\mathfrak m$), it is 
straightforward to deduce that the action of $M$ on $E_H$ preserves the connection 
$\widetilde\xi$.
\end{proof}

\begin{definition}\label{def1}
A holomorphic principal $H$--bundle $E_H$ over $X$ is called \textit{homogeneous} if there is a 
connected complex Lie group $M$ and a surjective holomorphic homomorphism $\rho\, :\, M\, 
\longrightarrow\, {\mathbb G}$, such that there is a holomorphic action $\delta$ of $M$ on $E_H$ 
satisfying the condition that $(E_H,\, \delta)$ is a $\rho$--equivariant holomorphic principal 
$H$--bundle.
\end{definition}

\begin{proposition}\label{prop2}
A holomorphic principal $H$--bundle $E_H$ over $X$ is homogeneous if and only if
the holomorphic principal $H$--bundle $g^*E_H$ is holomorphically isomorphic to $E_H$
for every $g\, \in\, \mathbb G$.
\end{proposition}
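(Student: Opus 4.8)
The plan is to prove the two implications separately, using a dictionary between isomorphisms $g^*E_H\,\cong\, E_H$ and $H$--equivariant biholomorphisms of $E_H$ that cover the automorphisms $g\,\in\,\mathbb G$. The forward implication (homogeneity in the sense of Definition \ref{def1} implies that all pull-backs are isomorphic) is routine, while the converse hinges on a connected-component argument that I expect to be the only real difficulty.

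For the forward direction, suppose $(E_H,\,\delta)$ is a $\rho$--equivariant principal $H$--bundle as in Definition \ref{def1}, with $\rho\,:\,M\,\longrightarrow\,\mathbb G$ surjective. Given $g\,\in\,\mathbb G$, I would use surjectivity of $\rho$ to choose $y\,\in\,M$ with $\rho(y)\,=\,g$ and consider the biholomorphism $\delta_y\,:\,E_H\,\longrightarrow\,E_H$, $e\,\longmapsto\,\delta(y,\,e)$. The defining relation \eqref{e16} gives $p\circ\delta_y\,=\,g\circ p$, and the commutativity of the $M$-- and $H$--actions (Definition \ref{def0}(2)) makes $\delta_y$ right $H$--equivariant. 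Hence $e\,\longmapsto\,(p(e),\,\delta_y(e))$ defines an isomorphism of principal $H$--bundles $E_H\,\xrightarrow{\ \sim\ }\,g^*E_H$, settling this direction with no essential difficulty.

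For the converse, I would reuse the complex Lie group $\mathbb A_D$, its identity component $\mathbb A^0_D$, the homomorphism $\theta$ of \eqref{e23}, and the tautological action $\mathbb T_D$ of \eqref{ta} from the proof of Theorem \ref{thm1}(2). Assuming $g^*E_H\,\cong\, E_H$ for every $g\,\in\,\mathbb G$, I would compose any such isomorphism $E_H\,\xrightarrow{\ \sim\ }\,g^*E_H$ with the canonical $H$--equivariant map $g^*E_H\,\longrightarrow\,E_H$ that covers $g$, obtaining an $H$--equivariant biholomorphism $f_g\,:\,E_H\,\longrightarrow\,E_H$ with $p\circ f_g\,=\,g\circ p$. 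Since $g$ preserves $D$, the pair $(g,\,f_g)$ lies in $\mathbb A_D$ and $\theta(g,\,f_g)\,=\,g$, so $\theta\,:\,\mathbb A_D\,\longrightarrow\,\mathbb G$ is surjective. It would then suffice to show that the restriction of $\theta$ to $\mathbb A^0_D$ is already surjective, since $(E_H,\,\mathbb T_D)$ is $\theta$--equivariant, exactly as at the end of the proof of Theorem \ref{thm1}(2), and this exhibits $E_H$ as homogeneous in the sense of Definition \ref{def1}.

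The step I expect to be the crux is promoting surjectivity of $\theta$ on all of $\mathbb A_D$ to surjectivity of its restriction to the identity component $\mathbb A^0_D$. Here I would exploit that $\theta$ is a homomorphism of finite-dimensional complex Lie groups, so the rank of its differential is constant on $\mathbb A_D$: if $d\theta$ failed to be surjective at the identity it would be nowhere surjective, and Sard's theorem would then force the image $\theta(\mathbb A_D)$ to have measure zero in $\mathbb G$, contradicting the surjectivity just established. Hence $d\theta$ is surjective at the identity, $\theta$ is a submersion there, and $\theta(\mathbb A^0_D)$ contains an open neighborhood of the identity of $\mathbb G$. A subgroup containing such a neighborhood is open, and an open subgroup of the connected group $\mathbb G$ is all of $\mathbb G$; thus $\theta(\mathbb A^0_D)\,=\,\mathbb G$, which completes the argument.
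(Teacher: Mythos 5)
Your proof is correct, and its skeleton is the same as the paper's: the forward direction lifts $g\in\mathbb G$ to $\widetilde g\in\rho^{-1}(g)$ and uses the action of $\widetilde g$ as the isomorphism between $E_H$ and $g^*E_H$, while the converse builds the equivariant structure from $M=\mathbb A^0_D$, $\rho=\theta$ as in \eqref{e23}, and $\delta={\mathbb T}_D$ as in \eqref{ta}. Where you genuinely add something is at the step you call the crux. The paper's proof simply asserts that the isomorphisms $g^*E_H\cong E_H$ make $\theta$ surjective; but, as you implicitly observe, each such isomorphism only yields an element $(g,\,f_g)$ of $\mathbb A_D$, with no control over which connected component it lies in, so what follows immediately is surjectivity of the projection from the (possibly disconnected) group $\mathbb A_D$, not surjectivity of $\theta$ on $\mathbb A^0_D$ --- and Definition \ref{def1} requires $M$ to be connected, so the distinction matters. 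Your constant-rank plus Sard plus open-subgroup argument is a correct way to close this gap; for comparison, in Theorem \ref{thm1}(2) the analogous surjectivity is obtained at the Lie algebra level, using the splitting $\varphi_*$ furnished by a logarithmic connection, an option not available here since no connection is assumed. Two minor points would make your argument airtight: (i) Sard's theorem requires the domain to be second countable (equivalently, $\mathbb A_D$ should have at most countably many components), which is the standard convention for such automorphism groups but deserves mention, since a 0--dimensional group of uncountable cardinality can surject onto a positive-dimensional one; (ii) strictly speaking, the projection $(\tau,\,f)\longmapsto\tau$ on all of $\mathbb A_D$ takes values in the full group of automorphisms of $X$ preserving $D$, of which $\mathbb G$ is only the identity component, so the rank/Sard argument should be phrased with that larger group as target --- since $\mathbb G$ is open in it and your elements $(g,\,f_g)$ hit all of $\mathbb G$, the contradiction with the image having measure zero persists verbatim.
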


\begin{proof}
First assume that there is a
connected complex Lie group $M$ and a surjective holomorphic homomorphism $\rho\, :\, M\,
\longrightarrow\, {\mathbb G}$, such that there is a holomorphic action $\delta$ of $M$ on $E_H$
satisfying the condition that $(E_H,\, \delta)$ is a $\rho$--equivariant holomorphic principal
$H$--bundle. For any $g\, \in\, \mathbb G$, take any $\widetilde{g}\, \in\, \rho^{-1}(g)$. Then the
action of $\widetilde{g}$ on $E_H$ (given by $\delta$) is a holomorphic isomorphism of the
holomorphic principal $H$--bundle $g^*E_H$ with $E_H$. 

To prove the converse, assume that $g^*E_H$ is holomorphically isomorphic to $E_H$ for every $g\, \in\, \mathbb 
G$. Consider the homomorphism $\theta$ in \eqref{e23}. Since $g^*E_H$ is holomorphically isomorphic to $E_H$ for 
every $g\, \in\, \mathbb G$, we conclude that the homomorphism $\theta$ is surjective. Now in Definition 
\ref{def1}, set $M\,=\, {\mathbb A}^0_D$, $\rho\,=\, \theta$ and $\delta$ to be the tautological action ${\mathbb 
T}_D$ of ${\mathbb A}^0_D$ on $E_H$ (see \eqref{ta}). Since $\theta$ is surjective, it follows that $E_H$ is 
homogeneous.
\end{proof}

The following is an immediate consequence of Theorem \ref{thm1} and Proposition \ref{prop2}.

\begin{corollary}\label{cor2}
A holomorphic principal $H$--bundle $E_H$ over $X$ admits a logarithmic connection singular over $D$
if and only if for every $g\, \in\, \mathbb G$ the holomorphic principal $H$--bundle $g^*E_H$ is
holomorphically isomorphic to $E_H$.
\end{corollary}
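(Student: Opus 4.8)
The plan is to chain together Theorem \ref{thm1} and Proposition \ref{prop2} through the notion of homogeneity introduced in Definition \ref{def1}. First I would note that Definition \ref{def1} declares $E_H$ to be homogeneous precisely when there exists a connected complex Lie group $M$, a surjective holomorphic homomorphism $\rho\,:\,M\,\longrightarrow\,\mathbb{G}$, and a holomorphic action $\delta$ of $M$ on $E_H$ making $(E_H,\,\delta)$ a $\rho$--equivariant principal $H$--bundle. This is exactly the data appearing in both parts of Theorem \ref{thm1}, which is what makes the two results compose.

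Next I would use Theorem \ref{thm1} to identify the property of admitting a logarithmic connection singular over $D$ with homogeneity. For one implication, if $E_H$ admits such a connection, then Theorem \ref{thm1}(2) furnishes a connected complex Lie group $M$, a surjective homomorphism $\rho$, and an action $\delta$ making $(E_H,\,\delta)$ a $\rho$--equivariant principal $H$--bundle; by Definition \ref{def1} this is precisely the assertion that $E_H$ is homogeneous. Conversely, if $E_H$ is homogeneous, then by Definition \ref{def1} there is a $\rho$--equivariant structure $(E_H,\,\delta)$ with $\rho$ surjective, and Theorem \ref{thm1}(1) then produces a logarithmic connection on $E_H$ singular over $D$. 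Hence $E_H$ admits a logarithmic connection singular over $D$ if and only if $E_H$ is homogeneous.

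Finally I would invoke Proposition \ref{prop2}, which asserts that $E_H$ is homogeneous if and only if $g^*E_H$ is holomorphically isomorphic to $E_H$ for every $g\,\in\,\mathbb{G}$. Combining this with the equivalence established in the previous step immediately yields the corollary: $E_H$ admits a logarithmic connection singular over $D$ if and only if $g^*E_H$ is holomorphically isomorphic to $E_H$ for all $g\,\in\,\mathbb{G}$.

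Since every step is a direct appeal to an already proved statement, I do not expect any genuine obstacle here; the argument is purely a formal composition of the two results. The only point deserving a moment's care is to verify that the notion of homogeneity used as the hypothesis of Proposition \ref{prop2} is the same as the $\rho$--equivariant structure produced by Theorem \ref{thm1}(2). Both are governed by Definition \ref{def1}, so the two equivalences link up without any gap, and the corollary follows at once.
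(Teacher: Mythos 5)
Your proposal is correct and is exactly the paper's argument: the paper states Corollary \ref{cor2} as an immediate consequence of Theorem \ref{thm1} and Proposition \ref{prop2}, with Definition \ref{def1} serving as the common notion of homogeneity that links the two, just as you spell out. No gaps; your version merely makes explicit the composition the paper leaves implicit.
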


\section{Infinitesimal deformations of principal bundles}

The space of all infinitesimal deformations of a holomorphic principal
$H$--bundle $E_H\,\stackrel{p}{\longrightarrow}\, X$ are parametrized by $H^1(X,\,
\text{ad}(E_H))$. This in particular means the following.
Let $T$ be a complex manifold with a base point $t_0\, \in\, T$. Let
${\mathcal E}_H\, \longrightarrow\, T\times X$ be a holomorphic principal $H$--bundle.
For any $t\, \in\, T$, the holomorphic principal $H$--bundle ${\mathcal E}_H\vert_{\{t\}\times X}$
on $X$ will be denoted by ${\mathcal E}^t_H$. Assume that we are given a holomorphic isomorphism
of $E_H$ with the principal $H$--bundle ${\mathcal E}^{t_0}_H$; here $X$ is identified with
$\{t_0\}\times X$ using the map $x\, \longmapsto\, (t_0,\, x)$. So $\{{\mathcal E}^t_H\}_{t\in T}$
is a holomorphic family of holomorphic principal $H$--bundles over $X$ such that the
holomorphic principal $H$--bundle over $X$ for $t_0\, \in\, T$ is the given holomorphic principal
$H$--bundle $E_H$. Then there is a $\mathbb C$--linear homomorphism
\begin{equation}\label{d1}
T_{t_0} T\, \longrightarrow\, H^1(X,\, \text{ad}(E_H))
\end{equation}
which is compatible with the pullback operation of families of holomorphic principal $H$--bundles 
on $X$ (see \cite{Do} for more details). The homomorphism in \eqref{d1} is called the
\textit{infinitesimal deformation map} at $t_0$ for the family ${\mathcal E}_H$ of holomorphic
principal $H$--bundles over $X$.

Consider the Lie group $\mathbb G$ in Section \ref{se3.1}. Let
\begin{equation}\label{vp}
\varpi\, :\, {\mathbb G}\times X\, \longrightarrow\, X\, , \ \ (g,\, x)\, \longmapsto\, g(x)
\end{equation}
be the evaluation map (recall that $\mathbb G$ is a subgroup of the automorphism group of $X$).
Note that this map $\varpi$ is holomorphic. As in \eqref{e3}, consider a holomorphic principal
$H$--bundle $E_H\,\stackrel{p}{\longrightarrow}\, X$. Let
\begin{equation}\label{d3}
{\mathcal E}_H\, :=\, \varpi^*E_H\, \longrightarrow\, {\mathbb G}\times X
\end{equation}
be the pulled back holomorphic principal $H$--bundle over ${\mathbb G}\times X$. Note
that the restriction ${\mathcal E}^e_H \,=\, (\varpi^*E_H)\vert_{\{e\}\times X}$,
where $e$ is the identity element of $\mathbb G$, is identified with the
holomorphic principal $H$--bundle $E_H$. Therefore, as in \eqref{d1}, we have
the infinitesimal deformation map
\begin{equation}\label{d2}
f\, :\, T_e{\mathbb G}\,=\, {\mathfrak g} \, \longrightarrow\, H^1(X,\, \text{ad}(E_H))\, .
\end{equation}

\begin{proposition}\label{prop1}
A holomorphic principal $H$--bundle $E_H$ over $X$ is homogeneous (see Definition \ref{def1})
if and only if the homomorphism $f$ in \eqref{d2} is the zero map.
\end{proposition}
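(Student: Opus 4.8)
The plan is to identify the infinitesimal deformation map $f$ in \eqref{d2} with the connecting homomorphism of the logarithmic Atiyah exact sequence \eqref{e8}, and then to exploit the triviality of $TX(-\log D)$ together with Corollary \ref{cor2}. The upshot will be the chain of equivalences $f\,=\,0\,\Longleftrightarrow\, E_H$ admits a logarithmic connection singular over $D\,\Longleftrightarrow\, E_H$ is homogeneous.

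First I would compute $f$ explicitly. The differential of the evaluation map $\varpi$ at a point $(e,\, x)$ sends $(v,\, u)\,\in\,{\mathfrak g}\oplus T_xX$ to $\gamma_0(v)(x)+u$; in particular the ${\mathfrak g}$--direction maps into $TX(-\log D)$ by \eqref{e19}. Using the standard description of the infinitesimal deformation map via the Atiyah sequence (see \cite{Do}), $f$ is the connecting homomorphism of the short exact sequence on $X$ obtained by restricting the Atiyah sequence of ${\mathcal E}_H\,=\,\varpi^*E_H$ to $\{e\}\times X$ and taking the part lying over the ${\mathfrak g}$--direction of $T({\mathbb G}\times X)\vert_{\{e\}\times X}\,=\,({\mathfrak g}\otimes{\mathcal O}_X)\oplus TX$. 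Because the Atiyah bundle of a pullback is the fiber product ${\rm At}(\varpi^*E_H)\,=\,T({\mathbb G}\times X)\times_{\varpi^*TX}\varpi^*{\rm At}(E_H)$, this restricted sequence is
$$
0\,\longrightarrow\,{\rm ad}(E_H)\,\longrightarrow\, W\,\longrightarrow\,{\mathfrak g}\otimes{\mathcal O}_X\,\longrightarrow\, 0\, ,
$$
where $W$ is the fiber product of $\gamma\,:\,{\mathfrak g}\otimes{\mathcal O}_X\,\longrightarrow\, TX(-\log D)$ and $\beta\,:\,{\rm At}(E_H)(-\log D)\,\longrightarrow\, TX(-\log D)$ (the logarithmic structure appears automatically, since the image of $\gamma$ lies in $TX(-\log D)$ and hence $\beta'$ must land there as well).

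The key simplification comes from Lemma \ref{lem1}: since $\gamma$ is an isomorphism, the projection $W\,\longrightarrow\,{\rm At}(E_H)(-\log D)$ is an isomorphism, and $\gamma^{-1}$ carries the displayed sequence isomorphically onto the logarithmic Atiyah sequence \eqref{e8}. Passing to the long exact cohomology sequence and using Corollary \ref{cor1} (that $\gamma_0$ is an isomorphism), I obtain $f\,=\,\delta_{\rm At}\circ\gamma_0$, where $\delta_{\rm At}\,:\,H^0(X,\,TX(-\log D))\,\longrightarrow\, H^1(X,\,{\rm ad}(E_H))$ is the connecting homomorphism of \eqref{e8}. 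As $\gamma_0$ is an isomorphism, $f\,=\,0$ if and only if $\delta_{\rm At}\,=\,0$, which by exactness is equivalent to the surjectivity of the induced map $\beta_*\,:\,H^0(X,\,{\rm At}(E_H)(-\log D))\,\longrightarrow\, H^0(X,\,TX(-\log D))$.

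Finally I would convert surjectivity of $\beta_*$ into the existence of a logarithmic connection, here using that $TX(-\log D)$ is holomorphically trivial. Fixing a basis $v_1,\,\cdots,\, v_d$ of ${\mathfrak g}$, the sections $\gamma_0(v_1),\,\cdots,\,\gamma_0(v_d)$ form a global holomorphic frame of $TX(-\log D)$, so any lifts of them under $\beta_*$ determine, by ${\mathcal O}_X$--linear extension over this frame, a holomorphic splitting $\varphi$ of \eqref{e8}, that is, a logarithmic connection singular over $D$; the converse implication is immediate since a splitting induces a right inverse of $\beta_*$. Hence $f\,=\,0$ if and only if $E_H$ admits a logarithmic connection singular over $D$, which by Corollary \ref{cor2} holds if and only if $E_H$ is homogeneous. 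The step I expect to be the main obstacle is the first one, namely pinning down the infinitesimal deformation map as the connecting homomorphism of the logarithmic Atiyah sequence: this requires the pullback description of the Atiyah bundle and the decisive use of Lemma \ref{lem1} to collapse the fiber product $W$ onto ${\rm At}(E_H)(-\log D)$.
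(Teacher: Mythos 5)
Your proposal is correct, but it takes a genuinely different route from the paper for one of the two directions, and it fills in a step the paper leaves unproved. The paper's proof of ``$f=0$ implies homogeneous'' is essentially yours: it asserts the identity $f=\lambda\circ\gamma_0$, where $\lambda$ is the connecting homomorphism of the long exact sequence of \eqref{e8}, deduces from Corollary \ref{cor1} that $\lambda=0$, hence that $\beta_*$ is surjective, promotes a linear section of $\beta_*$ to a bundle splitting using the triviality of $TX(-\log D)$ exactly as you do, and invokes Theorem \ref{thm1}(2). However, the paper never proves the identity $f=\lambda\circ\gamma_0$; your first step --- computing $d\varpi$ at $(e,x)$, using ${\rm At}(\varpi^*E_H)\,=\,T({\mathbb G}\times X)\times_{\varpi^*TX}\varpi^*{\rm At}(E_H)$, and collapsing the fiber product $W$ onto ${\rm At}(E_H)(-\log D)$ via Lemma \ref{lem1} --- is precisely the missing justification, and it is sound. (Your parenthetical claim that $(\beta')^{-1}(TX(-\log D))\,=\,{\rm At}(E_H)(-\log D)$ inside ${\rm At}(E_H)$ does deserve a line of proof: it follows because, comparing \eqref{f2} with \eqref{e7}, both relevant subsheaves of $TE_H$ have the same kernel $\mathcal K$ and the same image $p^*(TX(-\log D))$ under $dp$.) For the other direction, ``homogeneous implies $f=0$,'' the paper argues quite differently: it pulls the family back to $M\times X$ along $\rho\times{\rm Id}_X$, observes that the equivariant action trivializes this pulled-back family via $(z,y)\longmapsto(z,\delta(z,y))$, so that $f\circ\rho'\,=\,f_1\,=\,0$, and concludes from the surjectivity of $\rho'$; this uses only functoriality of the deformation map and its vanishing for constant families, with no computation of $f$ at all. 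Your version routes this direction through the chain homogeneous $\Rightarrow$ logarithmic connection $\Rightarrow$ $\beta_*$ surjective $\Rightarrow$ $\lambda=0$ $\Rightarrow$ $f=0$, which is more uniform (both implications flow from the single identity $f=\lambda\circ\gamma_0$) but leans on the standard, unproved deformation-theoretic fact that the infinitesimal deformation map is the connecting homomorphism of the restricted Atiyah-type sequence; the paper's argument is softer on that point. One minor citation slip: the equivalence ``logarithmic connection $\Leftrightarrow$ homogeneous'' that you use at the end is Theorem \ref{thm1} together with Definition \ref{def1}; Corollary \ref{cor2} phrases it via the pullbacks $g^*E_H$, so strictly you should either cite Theorem \ref{thm1} or add Proposition \ref{prop2} to the reference.
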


\begin{proof}
First assume that $E_H$ is homogeneous. So there is a
connected complex Lie group $M$ and a surjective holomorphic homomorphism $\rho\, :\, M\,
\longrightarrow\, {\mathbb G}$, such that there is a holomorphic action
\begin{equation}\label{d8}
\delta\, :\, M\times E_H\, \longrightarrow\, E_H
\end{equation}
as in Definition \ref{def0} satisfying the condition that $(E_H,\, \delta)$ is a
$\rho$--equivariant holomorphic principal $H$--bundle. We have the principal $H$--bundle
\begin{equation}\label{d7}
{\mathcal F}_H\,:=\, (\rho\times\text{Id}_X)^* {\mathcal E}_H\, \longrightarrow\,
M\times E_H\, ,
\end{equation}
where ${\mathcal E}_H$ is the holomorphic principal $H$--bundle in \eqref{d3}. Let
$e'\, \in\, M$ be the identity element. For the family of principal $H$--bundles
${\mathcal F}_H$ parametrized by $M$,
as in \eqref{d1}, we have the infinitesimal deformation map
\begin{equation}\label{d5}
f_1\, :\, T_{e'} M\,=\, {\mathfrak m} \, \longrightarrow\, H^1(X,\, \text{ad}(E_H))\, ,
\end{equation}
where $\mathfrak m$ as before is the Lie algebra of $M$. Since the homomorphism in
\eqref{d1} is compatible with the pullback operation of families of holomorphic principal
$H$--bundles over $X$, we have
\begin{equation}\label{d6}
f_1\,=\, f\circ\rho'\, ,
\end{equation}
where $f$, $f_1$ and $\rho'$ are the homomorphisms in \eqref{d2}, \eqref{d5} and
\eqref{rp} respectively.

It can be shown that the action $\delta$ in \eqref{d8} identifies the family of holomorphic
principal $H$--bundles ${\mathcal F}_H$ in \eqref{d7} with the trivial family
\begin{equation}\label{d9}
p^*_2 E_H\, \longrightarrow\, M\times X\, ,
\end{equation}
where $p_2\, :\, M\times X\, \longrightarrow\, X$ is the projection
$(z,\, x)\, \longmapsto\, x$. To prove this, consider the map
$$
\widetilde{\delta}\, :\, M\times E_H\, \longrightarrow\, M\times E_H\, ,\ \ (z,\, y)\,
\longmapsto\, (z,\, \delta(z,y))\, .
$$
From the equality in \eqref{e16} it follows that this map $\widetilde{\delta}$ is a holomorphic
isomorphism of the holomorphic principal $H$--bundle ${\mathcal F}_H$ in \eqref{d7} with the
holomorphic principal $H$--bundle $p^*_2 E_H$ in \eqref{d9}. Since $p^*_2 E_H$ is a constant
family, the infinitesimal deformation map 
$$
T_{e'} M\,=\, {\mathfrak m} \, \longrightarrow\, H^1(X,\, \text{ad}(E_H))
$$
for $p^*_2 E_H$ is the zero homomorphism. Consequently,
the infinitesimal deformation map $f_1$ in \eqref{d5} is the zero homomorphism. Since
$\rho'$ in \eqref{rp} is surjective (as $\rho$ is surjective) from \eqref{d6} it now follows
that $f\,=\, 0$.

To prove the converse, assume that $f\,=\, 0$. Consider the short exact sequence in
\eqref{e8}. Let
\begin{equation}\label{d10}
H^0(X,\, {\rm At}(E_H)(-\log D))\, \stackrel{\beta_*}{\longrightarrow}\,
H^0(X,\,TX(-\log D)) \, \stackrel{\lambda}{\longrightarrow}\,H^1(X,\, \text{ad}(E_H))
\end{equation}
be the long exact sequence of cohomologies associated to it, where $\beta_*$ is the
homomorphism in \eqref{bts} and $\lambda$ is the connecting homomorphism. Consider the isomorphism
$$
\gamma_0\, :\, {\mathfrak g}\, \, \longrightarrow\, H^0(X,\, TX(-\log D))
$$
in Corollary \ref{cor1}. We have
$$
\lambda\circ\gamma_0\,=\, f\, ,
$$
where $\lambda$ is the homomorphism in \eqref{d10}. Since $f\,=\, 0$, and $\gamma_0$ is
an isomorphism, we conclude that $\lambda\, =\, 0$.

The homomorphism $\beta_*$ in \eqref{d10} is surjective, because $\lambda\,=\, 0$.
Fix a homomorphism
$$
b\, :\, H^0(X,\,TX(-\log D))\, \longrightarrow\,H^0(X,\, {\rm At}(E_H)(-\log D))
$$
such that
\begin{equation}\label{d11}
\beta_*\circ b\,=\,\text{Id}_{H^0(X,TX(-\log D))}\, .
\end{equation}
As the holomorphic vector bundle $TX(-\log D)$ is holomorphically trivial, the homomorphism
$b$ in \eqref{d11} produces a homomorphism
$$
b'\, :\, TX(-\log D)\, \longrightarrow\, {\rm At}(E_H)(-\log D)
$$
that sends any $v\, \in\, TX(-\log D)_x$ to $$b(\widetilde{v})(x)\,\in\, {\rm At}(E_H)
(-\log D)_x\, ,$$ where $\widetilde{v}\, \in\, H^0(X,\,TX(-\log D))$ is the unique
holomorphic section such that $\widetilde{v}(x)\,=\, v$. From \eqref{d11} it follows that
$$
\beta\circ b'\,=\, \text{Id}_{TX(-\log D)}\, ,
$$
where $\beta$ is the projection in \eqref{e8}. Hence $b'$ defines a logarithmic connection
on $E_H$ singular over $D$. Now from Theorem \ref{thm1}(2) we conclude that $E_H$
is homogeneous.
\end{proof}

\section*{Acknowledgements}

The second-named author thanks Universit\'e de Cergy-Pontoise for hospitality.
He is partially supported by a J. C. Bose Fellowship. The third-named
author is grateful to ASSMS, GC University Lahore for the support of this research under
the postdoctoral fellowship.


\begin{thebibliography}{99999}

\bibitem[At]{At} Atiyah, M. F., {\it Complex analytic connections in fibre
bundles}, Trans. Amer. Math. Soc. \textbf{85} (1957), 181--207.

\bibitem[BHH]{BHH} Biswas, I., Heu, V. and Hurtubise, J., {\it Isomonodromic deformations of
logarithmic connections and stability}, Math. Ann. {\bf 366} (2016), 121--140.

\bibitem[Co]{Co} Conrad, B., {\it From normal crossings to strict normal crossings},\\
http://math.stanford.edu/$\sim$conrad\\
/249BW17Page/handouts/crossings.pdf

\bibitem[De]{De} Deligne, P., ``\'Equations diff\'erentielles \`a points
singuliers r\'eguliers'', Lecture Notes in Mathematics, Vol. 163, Springer-Verlag,
Berlin-New York, 1970.

\bibitem[Do]{Do} Donin, I. F., {\it Construction of a versal family of deformations for
holomorphic bundles over a compact complex space}, Math. USSR Sb. {\bf 23} (1974), 405--416.

\bibitem[Sa]{Sa} Saito, K., {\it Theory of logarithmic differential forms and logarithmic vector
fields}, Jour. Fac. Sci. Univ. Tokyo, Section 1A (Math.) {\bf 27} (1980), 265--291.

\bibitem[Wi1]{Wi1} Winkelmann, J., {\it On manifolds with trivial logarithmic tangent bundle},
Osaka Jour. Math. \textbf{41} (2004), 473--484.

\bibitem[Wi2]{Wi2} Winkelmann, J., {\it On manifolds with trivial logarithmic tangent bundle:
the non-K\"ahler case}, Transform. Gr. \textbf{13} (2008), 195--209.

\end{thebibliography}
\end{document}